\documentclass[11pt]{article}
\usepackage{geometry}                		
\usepackage[parfill]{parskip}    		

\usepackage{amssymb}
\usepackage{amsmath}

\usepackage{palatino}

\usepackage{graphicx}
\usepackage[english]{babel}
\usepackage{amscd}
\usepackage{amsthm}

\usepackage{color}

\usepackage{hyperref}

\newtheorem{theorem}{Theorem}[section]
\newtheorem{lemma}[theorem]{Lemma}

\newtheorem{conjecture}[theorem]{Conjecture}

\newtheorem{problem}[theorem]{Problem}

\DeclareMathOperator*{\HC}{\mathcal{H}_{\mathcal{C}}}
\DeclareMathOperator*{\Hy}{\mathcal{H}}
\DeclareMathOperator*{\Cy}{\mathcal{C}}
\DeclareMathOperator*{\Bin}{\textit{Bin}}

\title{On the Shannon entropy of the number of vertices with zero in-degree in randomly oriented hypergraphs}

\author{
Christos Pelekis\thanks{Institute of Mathematics, Czech Academy of Sciences, \v{Z}itna 25, Praha 1, Czech Republic. 
Research supported by GA\v{C}R project 18-01472Y and RVO: 67985840. E-mail: pelekis.chr@gmail.com} 		
}


\begin{document}
	\maketitle
	
\begin{abstract}
Suppose that you have $n$ colours and $m$  mutually independent dice, each of which has $r$ sides. 
Each dice lands on any of its sides with equal probability. You may colour the sides of each die in any way you wish, but there is one restriction: you are not allowed to use the same colour more than once on the sides of a die. Any other colouring is allowed. Let $X$ be the number of different colours that you see after rolling the dice. How should you colour the sides of the dice in order to maximize the Shannon entropy of $X$? In this article we investigate this question. We show that the entropy of $X$ is at most $\frac{1}{2} \log(n) + O(1)$ and that the bound is tight, up to a constant additive factor, in the case of there being equally many coins and colours. Our proof employs  the differential entropy bound on discrete entropy, along with a lower bound on the entropy of binomial random variables whose outcome is conditioned to be an even integer. We conjecture that the entropy is maximized when the colours are distributed  over the sides of the dice as evenly as possible. 
\end{abstract}

\noindent {\emph{Keywords}: Shannon entropy; randomly oriented hypergraphs

\section{Prologue and main results}\label{s1}

This work is motivated by the following entropy-maximization problem: 
Fix positive integers $m,n,r$ such that $m\ge n>r\ge 2$. 
Suppose you are given $n$ colours and $m$ mutually independent dice, each of which has $r$ sides. Each dice lands on any of its sides with equal probability.  
You can colour the sides of the dice in any way you want, but there is only one restriction: you are not allowed to use the same colour more than once on the sides of a die. All other colourings are allowed. Let $X$ be the number of different colours  that you see after rolling the dice. In what way should you colour the sides of the dice in order to maximize the Shannon entropy of $X$? 

The \emph{Shannon entropy} (or \emph{entropy}, for short) of a random variable $X$ that takes values on a finite set $S$ is defined as 
\[ H(X) =- \sum_{s\in S} \mathbb{P}[X=s]\cdot \log \mathbb{P}[X=s] , \]
with the convention $0\log 0 = 0$. 
Throughout the text, $\log(\cdot)$ denotes logarithm with base $2$.  Shannon entropy may be thought of  as the "amount of information", or the "amount of surprise", that is evidenced by a random variable and, in a certain sense, random variables with large entropy are less "predictable". 
Entropy enjoys several interesting properties which render itself as a useful tool for several problems in  enumeration, statistics and theoretical computer science, among several others.   
We refer the reader to \cite{CoverThomas, Johnson} for excellent textbooks on the topic. 
A central theme that motivates the development of the theory of entropy concerns the so-called \emph{maximum entropy principle}: within a given class of random variables, find one that has maximum entropy (see  \cite{CoverThomas} for a whole chapter devoted to the topic).  

It is well-known that for any random variable taking values in a finite set $S$, it holds 
\[ H(X) \le \log(|S|). \] 
This is a consequence of Jensen's inequality. Notice that the bound is attained by the random variable that takes each value from $S$ with equal probability. 
The following result, due to James  Massey, is referred to as the \emph{the differential entropy bound on discrete entropy} and may be seen as a refinement of the aforementioned bound.  \\

\begin{theorem}[Massey]\label{massey_bound}
Let $X$ be a discrete random variable with finite variance, denoted $\text{Var}(X)$. Then 
\[ H(X) \le \frac{1}{2}\log\left(2\pi e (\text{Var}(X) + 1/12)\right). \]
\end{theorem}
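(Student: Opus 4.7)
The plan is to apply the standard ``dithering'' trick that converts the discrete random variable $X$ into a continuous one with the same entropy, and then invoke the fact that, among continuous random variables with a given variance, the Gaussian maximizes differential entropy. Implicitly one must assume that $X$ takes values in a set whose consecutive points are at distance at least one (for instance $\mathbb{Z}$); this is the setting of interest in the present paper since the random variable counting colours is integer-valued.

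First I would introduce $U$ uniformly distributed on $[-1/2,1/2]$ and independent of $X$, and set $Y = X+U$. Because $U$ has range one and the atoms of $X$ are separated by at least one, the density of $Y$ is piecewise constant: on each interval $[k-1/2,k+1/2]$ it equals $\mathbb{P}[X=k]$. Hence
\[
 h(Y) = -\sum_k \mathbb{P}[X=k]\,\log \mathbb{P}[X=k] = H(X),
\]
where $h(\cdot)$ denotes differential entropy. Independence of $X$ and $U$ furthermore yields
\[
 \text{Var}(Y) = \text{Var}(X)+\text{Var}(U) = \text{Var}(X)+ \tfrac{1}{12}.
\]

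Finally I would apply the classical Gaussian maximum-entropy inequality $h(Y)\le \tfrac{1}{2}\log\bigl(2\pi e\,\text{Var}(Y)\bigr)$, valid for any continuous random variable with finite variance and proved via the non-negativity of the Kullback--Leibler divergence between the law of $Y$ and a Gaussian with matching mean and variance (see, e.g., \cite{CoverThomas}). Substituting the two displays above gives the claimed bound. The only subtle point is the hypothesis on the spacing of the support of $X$: it is exactly what makes the intervals $[k-1/2,k+1/2]$ disjoint and ensures $h(Y)=H(X)$; without it, one would have to replace $1/12$ by a different constant depending on the minimal spacing.
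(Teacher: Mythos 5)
Your proof is correct, and it is essentially the standard argument for Massey's bound: the paper itself does not prove this theorem but defers to the cited references (Massey; Cover--Thomas), where exactly this dithering-plus-Gaussian-maximum-entropy argument appears. You are also right to flag the hypothesis on the support: as literally stated for an arbitrary discrete random variable the inequality is false (a uniform distribution on $N$ points packed into a unit interval has entropy $\log N$ but bounded variance), so the bound really requires the atoms of $X$ to be separated by at least $1$ --- a condition satisfied by the integer-valued counting variables to which the paper applies it.
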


In other words, bounds on the variance of discrete random variables imply bounds on their entropy.  
A proof of Theorem~\ref{massey_bound} can be found in \cite{Massey} (see also \cite[p. 258]{CoverThomas})  and a refinement can be found in \cite{Mow}.

This work is concerned with the problem of maximizing the entropy of the random variable that counts the  number of different colours after a roll of $m$ fair dice whose $r$ sides have been coloured using $n$ colours, subject to the condition that it is not allowed to use a colour more than once on the sides of a die; we refer to this condition as a \emph{proper colouring}.  
The random variable that counts the number of different colours after a toss of $m$ properly coloured coins (i.e., when $r=2$) has been previously studied in \cite{Pelekis, Pel_Schauer}, in the setting of maximising its median. Similarly to the setting of the median, 
we conjecture that a proper colouring over the dice that maximizes Shannon entropy is such that the colours have been distributed as evenly as possible over the sides of the dice. 
In order to be more precise, we need some extra piece of notation. 

Let the positive integers $m,n,r$ be such that $m\ge n >r\ge 2$. 
Suppose that $\mathcal{C}$ is a configuration consisting of $m$ dice all of whose $r$ sides have been properly coloured using $n$ colours.  Let $X_{\mathcal{C}}$ be the number of different colours after a roll of the dice. 
One may associate a hypergraph, $\HC=(V,\mathcal{E})$, to this configuration: for every colour put a vertex in $V$ and for every (properly) coloured die put an edge in $\mathcal{E}$ containing all vertices corresponding to the colours on the sides of the die.  
Notice that $|V|=n$ and $|\mathcal{E}|=m$. Moreover, the hypergraph $\HC$ may have isolated vertices and, since the same coloured dice may appear more than once in the configuration $\mathcal{C}$, it may have edges that appear more than one time in $\mathcal{E}$; i.e., it is a \emph{multi-hypergraph}.  
Notice also that every edge $E\in\mathcal{E}$ has cardinality $r$ or, in other words, the hypergraph $\HC$ is $r$-\emph{uniform}. 
A $2$-uniform (multi)hypergraph is just a (multi)graph. Here and later, the class consisting of all  $r$-uniform  multi-hypergraphs on $n$ vertices and $m$ edges is denoted by  $\mathcal{D}_{n,m,r}$.  
The class $\mathcal{D}_{n,n,r}$, i.e, the class consisting of all $r$-uniform hypergraphs having $n$ vertices and $n$ edges, will be of particular interest. 
When $r=2$, we write $\mathcal{G}_n$ instead of $\mathcal{D}_{n,n,2}$. 

Now rolling the dice corresponds to choosing an element from each edge $\HC$ uniformly at random, where each choice is done independently of all previous choices;   we refer to this sampling procedure  by saying that each edge in $\Hy_{\mathcal{C}}$ is \emph{randomly oriented towards one of its elements}.   
Here and later, the phrase \emph{random orientation on the edges} of a hypergraph $\Hy$  expresses the fact that each edge in $\Hy$ is oriented towards one of its elements with probability $\frac{1}{r}$, and independently of all other edges.  
For every $v\in V$, let $\deg_{\Hy}^{-}(v)$ denote the \emph{in-degree} of $v$, i.e.,  the number of edges in $\mathcal{E}$ that are oriented towards $v\in V$ after a random orientation on the edges of $\Hy$. Notice that 
\[ X_{\mathcal{C}} = |\{v\in V : \deg_{\Hy}^{-}(v) >0\}| . \]
In other words, $X_{\mathcal{C}}$ is the number of vertices with non-zero in-degree after a random orientation on the edges of $\HC$ and the above mentioned question on coloured dice can be equivalently expressed as follows. \\

\begin{problem}\label{prbl:1}
Fix positive integers $n,m,r$ such that $m\ge n>r\ge 2$. For every $\Hy\in\mathcal{D}_{n,m,r}$ let $X_{\Hy}$ be the random variable that counts the number of vertices with non-zero in-degree after a random orientation on the edges of $\Hy$. Find a hypergraph $\Hy\in\mathcal{D}_{n,m,r}$ such that 
\[ H(X_{\Hy}) \ge H(X_{\mathcal{F}}), \; \text{ for all } \; \mathcal{F}\in \mathcal{D}_{n,m,r} . \]
\end{problem}
We conjecture that the hypergraph that maximizes entropy is such that the degrees of its vertices are as equal as possible. More precisely, we believe that the following holds true. Given a hypegraph $\Hy=(V,\mathcal{E})$ and a vertex $v\in V$, we denote by $\deg_{\Hy}(v)$ the number of edges in $\mathcal{E}$ that contain $v$. \\

\begin{conjecture}\label{main_conjecture}
Let the positive integers $m,n,r$ be such that $m\ge n >r\ge 2$. 
A hypergraph $\Hy=(V,\mathcal{E})$ from $\mathcal{D}_{n,m,r}$ for which it holds
\[ H(X_{\Hy}) \ge H(X_{\mathcal{G}}), \; \text{ for all } \; \mathcal{G}\in \mathcal{D}_{n,m,r} \]
is such that 
\begin{equation}\label{deg_conj}
|\deg_{\Hy}(v_1)-\deg_{\Hy}(v_2)|\le 1, \; \text{ for all }\; v_1,v_2\in V.
\end{equation}
\end{conjecture}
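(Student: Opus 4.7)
The natural strategy is a local switching argument. Suppose, for contradiction, that $\mathcal{H} = (V, \mathcal{E}) \in \mathcal{D}_{n,m,r}$ is an entropy maximiser yet two vertices $v_1, v_2 \in V$ satisfy $\deg_{\mathcal{H}}(v_1) - \deg_{\mathcal{H}}(v_2) \geq 2$. Since $v_1$ has strictly larger degree, one can find an edge $E \in \mathcal{E}$ with $v_1 \in E$ and $v_2 \notin E$; let $\mathcal{H}' \in \mathcal{D}_{n,m,r}$ be obtained by replacing $E$ with $E' := (E \setminus \{v_1\}) \cup \{v_2\}$. It then suffices to prove $H(X_{\mathcal{H}'}) > H(X_{\mathcal{H}})$, which contradicts maximality and therefore forces the degree condition \eqref{deg_conj}.

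To carry out the comparison I would couple the orientations on all edges of $\mathcal{E} \setminus \{E\}$ identically in the two models, and couple the orientations of $E$ and $E'$ through their common sub-edge $E \cap E' = \{u_2, \ldots, u_r\}$: both edges point to a common $u_i$ with probability $(r-1)/r$, while with probability $1/r$ the orientation $E \to v_1$ is paired with $E' \to v_2$. Writing $Y$ for the number of vertices of $V \setminus \{v_1, v_2\}$ with positive in-degree, $Y$ is identical in the two models, and letting $J_j$, $J_j'$ denote the indicators that $v_j$ is hit in $\mathcal{H}$, $\mathcal{H}'$ respectively, one has $X_{\mathcal{H}} = Y + J_1 + J_2$ and $X_{\mathcal{H}'} = Y + J_1' + J_2'$. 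Only the $(J_1, J_2)$ marginal changes: since $d \mapsto 1 - (1 - 1/r)^d$ is strictly concave, the swap strictly moves the two parameters $\mathbb{P}[J_j = 1]$ towards one another and strictly increases their sum.

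From here the conjecture would follow from a purely distributional statement: for a non-negative integer random variable $Y$ and two $\{0,1\}$-valued variables $(J_1, J_2)$ representing the events that $v_1, v_2$ are hit, the entropy of $Y + J_1 + J_2$ is strictly Schur-concave in the pair of Bernoulli parameters along the one-parameter family induced by the swap. The dependence between $(J_1, J_2)$ and $Y$ (arising from the edges incident to $v_1$ or $v_2$ other than $E$) can be addressed by further conditioning on the orientations of those edges, thereby reducing the problem to a family of conditional inequalities of the same shape.

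The principal obstacle is precisely this distributional inequality. Entropy is a notoriously delicate functional: neither pointwise dominance of densities nor monotone changes in mean or variance directly imply monotonicity of entropy, and the Massey bound of Theorem~\ref{massey_bound} only provides one-sided control. The decisive step is therefore a convexity or rearrangement inequality for $H(Y + J_1 + J_2)$ under the swap, which I would try to establish by differentiating $H(X_{\mathcal{H}_t})$ along a fractional interpolation $\mathcal{H}_t$ between $\mathcal{H}$ and $\mathcal{H}'$ and controlling the sign of the derivative, or by rewriting the difference of entropies as a sum of Bregman-type divergences that can be bounded below term-by-term. As a warm-up and a source of intuition I would first settle the graph case $r = 2$ with $m = n$, i.e.\ over $\mathcal{G}_n$, where the components $Y, J_1, J_2$ decompose transparently enough that the inequality should be verifiable by direct computation.
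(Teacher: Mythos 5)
First, a point of order: the statement you are proving is Conjecture~\ref{main_conjecture}, which the paper explicitly leaves open --- there is no proof of it in the paper to compare against. The paper only establishes the upper bound $H(X_{\mathcal{H}})\le \frac{1}{2}\log(n)+O(1)$ (Theorem~\ref{main_theorem}) and a matching lower bound for cycles (Theorem~\ref{fix_r}). So anything you submit here must stand entirely on its own, and your proposal does not: it is a plan with an explicitly acknowledged hole at the decisive step. The switching setup itself is sound --- if $\deg_{\mathcal{H}}(v_1)\ge\deg_{\mathcal{H}}(v_2)+2$ then an edge $E$ with $v_1\in E$, $v_2\notin E$ exists (otherwise every edge through $v_1$ contains $v_2$ and the degrees could not be ordered this way), the replacement edge $E'=(E\setminus\{v_1\})\cup\{v_2\}$ stays in $\mathcal{D}_{n,m,r}$ since multi-edges are allowed, and a strict entropy increase under every such swap would indeed force \eqref{deg_conj}. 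But the entire content of the conjecture is the inequality $H(X_{\mathcal{H}'})>H(X_{\mathcal{H}})$, and for that you offer only candidate techniques (interpolation, Bregman decompositions) rather than an argument.

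Two concrete reasons the gap is not cosmetic. First, the fact that the swap increases $\mathbb{P}[J_1=1]+\mathbb{P}[J_2=1]$ (by concavity of $d\mapsto 1-(1-1/r)^d$) carries no entropy information by itself: pushing hitting probabilities toward $1$ concentrates $X_{\mathcal{H}}$ near $n$ and can only decrease entropy in the limit, and as the paper notes after Theorem~\ref{massey_bound}, variance and mean control give one-sided bounds at best. The ``Schur-concavity of $H(Y+J_1+J_2)$ along the swap'' is not a known lemma you can invoke; it is a restatement of what must be proved. Second, your proposed reduction by conditioning on the orientations of the other edges incident to $v_1$ or $v_2$ does not return you to ``inequalities of the same shape'': after that conditioning, $J_1$ and $J_2$ are each determined except for the single edge $E$ (resp.\ $E'$), the conditional parameters are no longer of the form $1-(1-1/r)^{d}$, and the conditional law of $Y$ varies with the conditioning event, so the family of conditional inequalities has a different and less symmetric structure than the unconditional one. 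The paper's own examples show how delicate the statement is: the star plus an edge in $\mathcal{G}_n$ violates \eqref{deg_conj} yet still achieves entropy of order $\frac{1}{2}\log(n)$, so any proof must detect strict, not just asymptotic, improvement. Your suggested warm-up ($r=2$, $m=n$) is a reasonable place to start, but as it stands the proposal does not prove the conjecture.
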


In other words, the colours should be distributed over the dice as evenly as possible. 
In this note we provide an upper bound on the entropy of $X_{\Hy}$, for $\Hy\in \mathcal{D}_{n,m,r}$.  
More precisely, we obtain the following. \\

\begin{theorem}\label{main_theorem}
Fix  $\Hy\in \mathcal{D}_{n,m,r}$ and let $X_{\Hy}$ be the number of vertices with non-zero in-degree after a random orientation on the edges of $\Hy$. Then
\[ H(X_{\Hy}) \le \frac{1}{2}\log(n) + \frac{1}{2}\log(\pi e) . \]
\end{theorem}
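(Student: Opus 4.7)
The plan is to invoke Massey's differential entropy bound (Theorem~\ref{massey_bound}), which reduces the statement to a bound on $\text{Var}(X_{\Hy})$. Concretely, if one can prove $\text{Var}(X_{\Hy}) \le n/4$, then Theorem~\ref{massey_bound} gives
\[ H(X_{\Hy}) \le \tfrac{1}{2}\log\bigl(2\pi e(n/4 + 1/12)\bigr) = \tfrac{1}{2}\log\bigl(\pi e(n/2 + 1/6)\bigr), \]
and since $n/2 + 1/6 \le n$ for every $n \ge 1$, the desired inequality $\tfrac{1}{2}\log(n) + \tfrac{1}{2}\log(\pi e)$ follows at once. So the whole problem boils down to a variance estimate.

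To bound the variance I would write $X_{\Hy} = \sum_{v\in V} Y_v$, where $Y_v$ is the indicator of the event $\{\deg_{\Hy}^{-}(v) > 0\}$, and expand
\[ \text{Var}(X_{\Hy}) = \sum_{v\in V} \text{Var}(Y_v) + \sum_{v \ne w} \text{Cov}(Y_v, Y_w). \]
Each $Y_v$ is Bernoulli, so $\text{Var}(Y_v) \le 1/4$, which gives a diagonal contribution of at most $n/4$. Hence it will suffice to show that \emph{every} covariance $\text{Cov}(Y_v, Y_w)$ is non-positive, so that the off-diagonal sum can only help.

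The covariance inequality is the heart of the argument, and I expect it to be the main (though not severe) obstacle. The event $\{Y_v = 0\}$ is the intersection, over all edges $E \ni v$, of the independent events $\{E \text{ is not oriented toward } v\}$, and similarly for $\{Y_w = 0\}$. Edges containing neither $v$ nor $w$ are irrelevant; edges containing exactly one of $v,w$ affect only one event and so contribute a perfectly factorizing term. The only possibly dependent contribution comes from edges containing both $v$ and $w$. For a single such edge, the probability of being oriented away from both is $(r-2)/r$, while the product of the individual probabilities is $((r-1)/r)^2$, and the trivial inequality $r(r-2) < (r-1)^2$ shows the former is strictly smaller. Multiplying over all edges, this per-edge negative correlation yields $\mathbb{P}[Y_v = 0, Y_w = 0] \le \mathbb{P}[Y_v=0]\,\mathbb{P}[Y_w=0]$, which is equivalent to $\text{Cov}(Y_v, Y_w) \le 0$.

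Putting everything together gives $\text{Var}(X_{\Hy}) \le n/4$, and the bound on $H(X_{\Hy})$ follows by the plug-in into Massey's theorem described above. The only non-routine step is the negative-correlation calculation, and even that is short thanks to the independence of orientations across distinct edges; the remaining work is pure algebra.
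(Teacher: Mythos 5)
Your argument is correct and shares its overall skeleton with the paper's proof: both reduce the theorem to the variance bound $\text{Var}(X_{\Hy})\le n/4$ via Massey's inequality, decompose $X_{\Hy}$ into the indicators of $\{\deg_{\Hy}^-(v)>0\}$, bound each diagonal term by $1/4$, and show that the indicators at distinct vertices are negatively correlated (your explicit check that $2\pi e(n/4+1/12)\le \pi e n$ is a detail the paper leaves implicit). The genuine difference is in how the negative correlation is established. The paper computes $\mathbb{E}[\mathbf{I}_{v_1}\mathbf{I}_{v_2}]$ head-on by conditioning on four events describing the behaviour of the shared edges, expands each conditional probability with the binomial theorem, and only after simplification arrives at the two-term difference $\left(\frac{r-2}{r}\right)^{d_3}\left(\frac{r-1}{r}\right)^{d_1+d_2}-\left(\frac{r-1}{r}\right)^{d_1+d_2+2d_3}\le 0$. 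You instead pass to the complementary events $\{Y_v=0\}$ and $\{Y_w=0\}$, whose joint probability factors edge-by-edge thanks to the independence of the orientations, and compare the per-edge factors $(r-2)/r$ and $\left((r-1)/r\right)^2$; since $\text{Cov}(Y_v,Y_w)=\text{Cov}(1-Y_v,1-Y_w)$ for indicators, this yields the same conclusion. The two routes produce literally the same expression for the covariance, but yours reaches it in a few lines and makes the reason for the sign transparent, namely $r(r-2)<(r-1)^2$ raised to the power $d_3$, while also handling the degenerate cases ($d_3=0$, or $r=2$ where the shared-edge factor vanishes) without any case distinction. This is a cleaner execution of the key step; nothing is missing.
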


We prove Theorem~\ref{main_theorem} in Section~\ref{sec:2}.
Moreover, we show that the bound is tight, up to an additive constant factor, when $m=n$ and $r=2$. 
That is, we show that there exist graphs in $G\in\mathcal{G}_n$ for which $H(X_G)$ is of order $\frac{1}{2}\log(n)$. In fact, one can find several graphs $G\in\mathcal{G}_n$ for which the entropy of $X_G$ is of order $\frac{1}{2}\log(n)$. For example, if $n$ is even, one can take $G$ to be the union of $n/2$ vertex-disjoint double edges. The degree sequence of $G$ satisfies \eqref{deg_conj}, but $G$ is not connected. An example of a connected graph $G\in\mathcal{G}_n$ for which $X_G$ has entropy of order $\frac{1}{2}\log(n)$ is a star on $n$ vertices, plus an edge joining any two of its vertices; however, this graph does not satisfy \eqref{deg_conj}. It would therefore be interesting to know whether there exist \emph{connected} graphs $G$, whose degree sequence satisfies \eqref{deg_conj}, and are such that  $H(X_{G})$ is of order $\frac{1}{2}\log(n)$. The next result shows that cycles on $n$ vertices are examples of such graphs. Moreover, it is not difficult to see that the entropy corresponding to a cycle is slightly larger than the entropies corresponding to the aforementioned examples.  \\

\begin{theorem}\label{fix_r}
Let $C_n$ denote a cycle on $n\ge 3$ vertices. Let $X_{C_n}$ be the number of vertices with non-zero in-degree after a random orientation on the edges of $C_n$. Then
\[ H(X_{C_n}) \ge  \frac{1}{2}\log(n) + \frac{1}{2}\log(\pi e) -\frac{3}{2}-\frac{1}{2\ln(2)(n-1)}. \] 
\end{theorem}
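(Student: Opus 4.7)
The plan is to reduce $X_{C_n}$ to a binomial random variable and then apply a sharp lower bound on its entropy. Label the vertices of $C_n$ as $\{1,\ldots,n\}$ cyclically with edges $e_i = \{i, i+1 \bmod n\}$, and encode the random orientation by i.i.d.\ Bernoulli$(1/2)$ variables $\eta_1,\ldots,\eta_n$, where $\eta_i = 1$ iff $e_i$ is directed from $i$ to $i+1$. Vertex $i$ has in-degree zero iff $\eta_{i-1} = 0$ and $\eta_i = 1$, so $Y := n - X_{C_n}$ counts the cyclic $01$-transitions in $\eta_1\cdots\eta_n$. Passing to the differences $\zeta_i := \eta_i \oplus \eta_{i+1}$, the map $\eta \mapsto (\zeta_1,\ldots,\zeta_{n-1})$ is a uniform $2$-to-$1$ surjection onto $\{0,1\}^{n-1}$, while $\zeta_n = \zeta_1 \oplus \cdots \oplus \zeta_{n-1}$. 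Setting $M := \zeta_1 + \cdots + \zeta_{n-1} \sim \Bin(n-1, 1/2)$ and noting that every cyclic $01$-transition is matched by a $10$-transition, one gets $\sum_{i=1}^n \zeta_i = 2Y$, and hence $Y = \lceil M/2 \rceil$.

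Because $Y$ is a deterministic function of $M$ with each fiber of size at most $2$, the chain rule for entropy gives
\[
H(X_{C_n}) \ =\ H(Y) \ =\ H(M) - H(M\mid Y) \ \ge\ H(\Bin(n-1, 1/2)) - 1.
\]
The conclusion of Theorem~\ref{fix_r} therefore follows once one establishes the sharp binomial estimate
\[
H(\Bin(n-1,1/2)) \ \ge\ \tfrac{1}{2}\log(\pi e n / 2) - \tfrac{1}{2\ln 2\,(n-1)},
\]
since subtracting $1$ and using $\tfrac{1}{2}\log(\pi e n /2) - 1 = \tfrac{1}{2}\log n + \tfrac{1}{2}\log(\pi e) - \tfrac{3}{2}$ yields the desired bound. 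Equivalently, $N := 2Y$ has the distribution of $\Bin(n,1/2)$ conditioned on being even, and the requisite estimate is precisely a lower bound on the entropy of a binomial conditioned to be an even integer, as advertised in the abstract.

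The main obstacle is this binomial entropy inequality. A naive Jensen argument via $\sum_k \binom{m}{k}^2 = \binom{2m}{m}$ yields only $H(\Bin(m,1/2)) \ge \tfrac{1}{2}\log(\pi m) + o(1)$, which falls short of the target by the constant $\tfrac{1}{2}\log(e/2)$. The missing $\tfrac{1}{2}\log e$ has to come from the quadratic decay of $\log \binom{m}{k}$ around its peak $k = m/2$: combining Stirling's formula with a second-order Taylor expansion of the binary entropy function around $1/2$ yields
\[
\mathbb{E}\bigl[\log\tbinom{m}{B}\bigr] \ \le\ m - \tfrac{\log e}{2} - \tfrac{1}{2}\log(\pi m/2) + O(1/m)
\]
for $B \sim \Bin(m, 1/2)$, where the $-\tfrac{\log e}{2}$ contribution stems from $\text{Var}(B) = m/4$. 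Subtracting from $m$ gives $H(\Bin(m,1/2)) \ge \tfrac{1}{2}\log(\pi e m/2) + o(1)$, and absorbing the estimate $\log(n/(n-1)) \le 1/((n-1)\ln 2)$ into the error term delivers the displayed binomial inequality. Tracking the Stirling remainder with enough care to pin down the precise coefficient $1/(2\ln 2 (n-1))$ is the principal technical burden.
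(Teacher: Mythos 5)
Your reduction is correct and is essentially the paper's route, just phrased differently: the paper first shows (via counting vertices of in-degree $0$ and $2$ on the cycle) that $n-X_{C_n}$ is half of a $\Bin(n,1/2)$ conditioned to be even, and then realizes that conditioned variable as $X+\delta_X$ with $X\sim\Bin(n-1,1/2)$; your XOR-of-edge-bits argument ($\zeta_i=\eta_i\oplus\eta_{i+1}$, $2Y=M+\delta_M$, $Y=\lceil M/2\rceil$) arrives at the identical identity in one step and is arguably cleaner. The chain-rule step $H(Y)=H(M)-H(M\mid Y)\ge H(M)-1$ is exactly the paper's Lemma on the cycle entropy, and your arithmetic reconciling $\tfrac12\log(\pi e n/2)-1$ with the stated bound is right. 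The one genuine gap is the final ingredient: the sharp inequality $H(\Bin(n-1,1/2))\ge\tfrac12\log(n-1)+\tfrac12\log(\pi e)-\tfrac12$ is not something you prove --- your Stirling/Taylor sketch produces the right leading terms but leaves the sign and size of the $o(1)$ and $O(1/m)$ remainders uncontrolled, and you acknowledge as much. The paper does not derive this either; it cites Adell, Lekuona and Yu (\emph{Sharp Bounds on the Entropy of the Poisson Law and Related Quantities}) for precisely this bound, after which the elementary estimate $\log(n-1)\ge\log(n)-\tfrac{1}{(n-1)\ln 2}$ gives the stated error term. So your proof becomes complete (and matches the paper's) once you replace the Stirling sketch with that citation; as written, the ``principal technical burden'' you defer is the only missing piece.
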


We conjecture that  $H(X_{C_n})\ge H(X_G)$ for all $G\in\mathcal{G}_n$. 
We prove Theorem~\ref{fix_r} in Section~\ref{sec:3}.  The proof employs a lower bound on the entropy of a binomial random variable whose outcome is conditioned to be an even positive integer. 
Our article ends with Section~\ref{sec:4} in which we state a conjecture. 

\section{Proof of Theorem~\ref{main_theorem}}\label{sec:2}

We show that $\text{Var}(X_{\Hy}) \le n/4$. The result then follows from Theorem~\ref{massey_bound}. 
For every vertex $v\in \Hy$, let $\mathbf{I}_v$ be the indicator of the event $\{\deg_{\Hy}^{-}(v) >0\}$ and notice that $X_{\Hy} = \sum_{v\in V} \mathbf{I}_v$. We may therefore write 
\begin{eqnarray*}  
\text{Var}(X_{\Hy})  &=& \sum_{v_1,v_2\in V} \left( \mathbb{E}[\mathbf{I}_{v_1} \cdot \mathbf{I}_{v_2}] -   \mathbb{E}[\mathbf{I}_{v_1}] \cdot  \mathbb{E}[\mathbf{I}_{v_2}]\right)  \\ 
&=& \sum_{v\in V} \left( \mathbb{E}[\mathbf{I}_{v}] - \mathbb{E}[\mathbf{I}_{v}]^2\right) + \sum_{v_1\neq v_2}  \left( \mathbb{E}[\mathbf{I}_{v_1} \cdot \mathbf{I}_{v_2}] -   \mathbb{E}[\mathbf{I}_{v_1}] \cdot  \mathbb{E}[\mathbf{I}_{v_2}]\right) .
\end{eqnarray*} 
We now show that, whenever $v_1\neq v_2$, we have 
\begin{equation}\label{neg_corr}
\mathbb{E}[\mathbf{I}_{v_1} \cdot \mathbf{I}_{v_2}] -   \mathbb{E}[\mathbf{I}_{v_1}] \cdot  \mathbb{E}[\mathbf{I}_{v_2}] \le 0 , 
\end{equation}
or, in other words, the indicators $\mathbf{I}_{v_1}$ and $\mathbf{I}_{v_2}$ are negatively correlated. 
This is clearly true when there exists no edge $E\in \Hy$ such that $\{v_1,v_2\}\subset E$, and we may therefore assume that $v_1$ and $v_2$ are both elements of some edge in $\Hy$. 
Let $\mathcal{E}_{v_1}$ be the class consisting of those edges in $\Hy$ that contain $v_1$ and do not contain $v_2$ and, similarly, let $\mathcal{E}_{v_2}$ be the class consisting of those edges in $\Hy$ that contain $v_2$ and do not contain $v_1$. Finally, let $\mathcal{E}_{v_1,v_2}$ be the subset of the edges in $\Hy$ that contain both $v_1$ and $v_2$. 
Notice that  
\[ \mathbb{E}[\mathbf{I}_{v_1}]\cdot   \mathbb{E}[\mathbf{I}_{v_2}]  = \left( 1- \left(\frac{r-1}{r}\right)^{\deg_{\Hy}(v_1)} \right)\cdot \left( 1- \left(\frac{r-1}{r}\right)^{\deg_{\Hy}(v_2)} \right) \]
and we proceed by working out the term $\mathbb{E}[\mathbf{I}_{v_1} \cdot \mathbf{I}_{v_2}]$. 
 
Let $A_1$ be the event "\emph{there is no edge in $\mathcal{E}_{v_1,v_2}$ which is oriented towards either $v_1$ or $v_2$}", let $A_2$ be the event "\emph{no edge from $\mathcal{E}_{v_1,v_2}$ is oriented towards $v_2$, but some edge from $\mathcal{E}_{v_1,v_2}$ is oriented towards $v_1$}", let $A_3$ be the event "\emph{no edge from $\mathcal{E}_{v_1,v_2}$ is oriented towards $v_1$, but some edge from $\mathcal{E}_{v_1,v_2}$ is oriented towards $v_2$}", and, finally, let $A_4$ be the event "\emph{some edge from $\mathcal{E}_{v_1,v_2}$ is oriented towards $v_1$ and some other edge from $\mathcal{E}_{v_1,v_2}$ is oriented towards $v_2$}". In particular, notice that the event $A_4$ has non-zero probability if and only if $|\mathcal{E}_{v_1,v_2}|\ge 2$. If $|\mathcal{E}_{v_1,v_2}|=1$, then $A_4$ is empty and so $\mathbb{P}(A_4)=0$. Moreover, when $r=2$ the event $A_1$ is empty as well; thus $\mathbb{P}(A_1)=0$. 

Now we may write   

\[  \mathbb{E}[\mathbf{I}_{v_1} \cdot \mathbf{I}_{v_2}] =\sum_{i=1}^{4} \mathbb{P}(A_i)\cdot \mathbb{P}(\mathbf{I}_{v_1} \cdot \mathbf{I}_{v_2} =1 | A_i) . \]
Let $d_1 = |\mathcal{E}_{v_1}|, d_2 = |\mathcal{E}_{v_2}|$ and $d_3=|\mathcal{E}_{v_1,v_2}|$ 
and notice that $\deg_{\Hy}(v_1)=d_1+d_3$ and $\deg_{\Hy}(v_2)=d_2+d_3$. We compute  
\[ \mathbb{P}(A_1)\cdot \mathbb{P}(\mathbf{I}_{v_1} \cdot \mathbf{I}_{v_2} =1 | A_1) = \left(\frac{r-2}{r}\right)^{d_3} \cdot \left(1 - \left(\frac{r-1}{r}\right)^{d_1}   \right)\cdot \left(1 - \left(\frac{r-1}{r}\right)^{d_2}   \right).  \]
Moreover, since  
\[\mathbb{P}(A_2) = \mathbb{P}(A_3) = \sum_{i=1}^{d_3}\binom{d_3}{i}\left(\frac{1}{r}\right)^i \left(\frac{r-2}{r}\right)^{d_3-i},  \]
the binomial theorem yields 
\[ \mathbb{P}(A_2)\cdot \mathbb{P}(\mathbf{I}_{v_1} \cdot \mathbf{I}_{v_2} =1 | A_2) =  \frac{1}{r^{d_3}} \left((r-1)^{d_3} -(r-2)^{d_3}\right)\cdot \left( 1- \left(\frac{r-1}{r}\right)^{d_2}\right)   \]
as well as 
\[ \mathbb{P}(A_3)\cdot \mathbb{P}(\mathbf{I}_{v_1} \cdot \mathbf{I}_{v_2} =1 | A_3) =  \frac{1}{r^{d_3}} \left((r-1)^{d_3} -(r-2)^{d_3}\right) \cdot \left( 1- \left(\frac{r-1}{r}\right)^{d_1}\right).   \]
Finally, notice that 
\[ \mathbb{P}(A_4)\cdot \mathbb{P}(\mathbf{I}_{v_1} \cdot \mathbf{I}_{v_2} =1 | A_4) = \mathbb{P}(A_4) = 1-\mathbb{P}(A_1)-\mathbb{P}(A_2)-\mathbb{P}(A_3). \]
Now straightforward calculations show that   
\[ 
\mathbb{E}[\mathbf{I}_{v_1} \cdot \mathbf{I}_{v_2}] -   \mathbb{E}[\mathbf{I}_{v_1}] \cdot  \mathbb{E}[\mathbf{I}_{v_2}] = \left(\frac{r-2}{r}\right)^{d_3}\left(\frac{r-1}{r}\right)^{d_1+d_2}  - \left(\frac{r-1}{r}\right)^{d_1+d_2+ 2d_3}  \le 0.
\]
Thus \eqref{neg_corr} holds true and we have shown  
\[ \text{Var}(X_{\Hy})  \le \sum_{v\in V} \left( \mathbb{E}[\mathbf{I}_{v}] - \mathbb{E}[\mathbf{I}_{v}]^2\right). \] 
Now write 
\begin{eqnarray*} 
\mathbb{E}[\mathbf{I}_{v}] - \mathbb{E}[\mathbf{I}_{v}]^2 &=&  \left(1-\left(\frac{r-1}{r}\right)^{\deg_{\Hy}(v)}\right) - \left(1-\left(\frac{r-1}{r}\right)^{\deg_{\Hy}(v)}\right)^2  \\
&=& \left(\frac{r-1}{r}\right)^{\deg_{\Hy}(v)}\left(1 - \left(\frac{r-1}{r}\right)^{\deg_{\Hy}(v)}\right) \\
&\le& \frac{1}{4},
\end{eqnarray*}
where the last estimate follows from the inequality $x(1-x)\le 1/4$, for $x\in[0,1]$. 
Hence $\text{Var}(X_{\Hy})  \le n/4$ and Theorem~\ref{main_theorem} follows from Theorem~\ref{massey_bound}.

\section{Proof of Theorem~\ref{fix_r}}\label{sec:3}

Throughout this section, we denote by $\Bin(n,1/2)$ the binomial distribution of parameters $n$ and $1/2$ and we occasionally identify a random variable with its distribution. Given two random variables $Z,W$, the notation $Z\sim W$ indicates that they have the same distribution. 
Recall that  we work within the class $\mathcal{G}_{n}$ and that, given $G\in \mathcal{G}_{n}$, we denote by $X_G$ the number of vertices with non-zero in-degree after a random orientation on the edges of $G$. 
A non-negative integer which is equal to zero $\textit{mod}\; 2$ is referred to as an \emph{even} integer.  
Finaly,  $\Bin(n,e)$ denotes a $\Bin(n,1/2)$ random variable conditional on the event that it is \emph{even}.  
In particular, notice that 
\[ \mathbb{P}[\Bin(n,e)=k] = \binom{n}{k}\frac{1}{2^{n-1}}, \text{ for even } k, \]
a fact that is immediate upon observing that the probability that a $\Bin(n,1/2)$ random variable is even equals $1/2$ (see also \cite[Lemma 1]{Pelekis} for a more general result). 
We begin by showing that the entropy of $X_{C_n}$, where $C_n$ denotes a cycle on $n$ vertices, equals the entropy of a $\Bin(n,e)$ random variable. 
The following result is a direct consequence of \cite[Theorem 4]{Pel_Schauer}, but we include here an independent proof (which borrows ideas from \cite{Pel_Schauer}) for the sake of completeness.\\ 

\begin{lemma}\label{coin_lemma}
Let $C_n$ be a cycle on $n\ge 3$ vertices. Then $H(X_{C_n}) = H(\Bin(n,e))$.
\end{lemma}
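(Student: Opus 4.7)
The plan is to identify $Y := n - X_{C_n}$, the number of vertices with zero in-degree in the randomly oriented cycle, with half the Hamming weight of a uniformly distributed even-weight binary vector, and then to read the distribution of $Y$ off from there.

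Label the vertices of $C_n$ cyclically as $v_1, \ldots, v_n$ and the edges as $e_i = \{v_i, v_{i+1}\}$, with indices modulo $n$. I would encode the random orientation by i.i.d.\ Bernoulli$(1/2)$ variables $\xi_1, \ldots, \xi_n$, using the convention that $\xi_i = 0$ means $e_i$ points to $v_i$ and $\xi_i = 1$ means $e_i$ points to $v_{i+1}$. Vertex $v_i$ has zero in-degree precisely when both incident edges point away from it, i.e., when $\xi_{i-1} = 0$ and $\xi_i = 1$, so $Y$ equals the number of cyclic occurrences of the pattern $01$ in $(\xi_1, \ldots, \xi_n)$. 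Since in any cyclic binary sequence the number of $01$ transitions equals the number of $10$ transitions, $2Y$ is the total number of indices $i$ with $\xi_{i-1} \neq \xi_i$.

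Next I would change variables to $\eta_i := \xi_i \oplus \xi_{i+1}$, again cyclically, so that $2Y = \sum_{i=1}^n \eta_i$ is the Hamming weight of $\eta$. The key point is that $\xi \mapsto \eta$ is a two-to-one map from $\{0,1\}^n$ onto the set $E_n$ of even-weight vectors in $\{0,1\}^n$: the identity $\sum_i \eta_i \equiv 2\sum_i \xi_i \equiv 0 \pmod 2$ forces $\eta \in E_n$; conversely, any $\eta \in E_n$ determines $\xi$ up to the free choice of $\xi_1$, and complementing every $\xi_i$ leaves $\eta$ unchanged. Consequently $\eta$ is uniformly distributed on $E_n$, and for every integer $k$ with $0 \le 2k \le n$,
\[
\mathbb{P}[Y = k] = \mathbb{P}\Bigl[\sum_{i=1}^n \eta_i = 2k\Bigr] = \frac{\binom{n}{2k}}{2^{n-1}} = \mathbb{P}[\Bin(n,e) = 2k].
\]
Since Shannon entropy depends only on the multiset of probabilities and not on the labels of the outcomes, I conclude that $H(X_{C_n}) = H(n-Y) = H(Y) = H(2Y) = H(\Bin(n,e))$.

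The only mildly delicate step is the uniformity claim for $\eta$ on $E_n$; once it is in place, the distribution of $Y$ is immediate and the lemma follows purely by the invariance of Shannon entropy under relabelling of outcomes.
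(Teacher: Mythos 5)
Your proof is correct. Where the paper reduces to $Z=n-X_{C_n}$ and then uses the in-degree sum formula ($n=|V_1|+2|V_2|$ together with $n=Z+|V_1|+|V_2|$) to get $E=2Z$ for the number $E$ of even-in-degree vertices, followed by a combinatorial alternation argument (between any two zero-in-degree vertices there is an in-degree-two vertex, and fixing the parity class of one vertex determines everything, giving $2\binom{n}{2k}$ orientations), you instead encode the orientation as a uniform bit string $\xi\in\{0,1\}^n$ and pass to the cyclic difference sequence $\eta_i=\xi_i\oplus\xi_{i+1}$, observing that this map is two-to-one onto the even-weight vectors and hence that $2Y$ is the Hamming weight of a uniform even-weight vector. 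The two arguments extract the same factor of $2$ (your "complementing all $\xi_i$ fixes $\eta$" is the paper's "two ways to choose the in-degree of $v_1$"), and both ultimately count $2\binom{n}{2k}$ orientations per value of $k$; but your change of variables makes the uniformity on even-weight vectors transparent and avoids both the degree-sum bookkeeping and the slightly delicate claim that fixing the even-in-degree set plus one parity determines the whole orientation. It also sidesteps the intermediate random variable $E$ entirely. A minor stylistic point: the identity "number of cyclic $01$ transitions equals number of $10$ transitions" deserves the one-line justification (blocks of equal symbols alternate around the cycle), but this is standard and does not constitute a gap.
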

\begin{proof}  
Let $Z$ be the number
of vertices with zero in-degree after the random orientation on the edges of $C_n$, and let $E$
be the number vertices of even in-degree. Notice that $X_{C_n} = n-Z$ and thus 
\[  H(X_{C_n}) = H(Z). \]
For $i\in\{1,2\}$, let $V_i:= \{v\in C_n : \deg_{C_n}^{-}(v)=i\}$.  The in-degree sum formula yields 
\[ n =\sum_{v\in C_n} \deg_{C_n}^{-}(v) = |V_1| + 2|V_2| . \]
Since $C_n$ has $n$ vertices, we also have 
\[ n = Z + |V_1| + |V_2| . \] 
If we subtract the last two equations we get $Z = |V_2|$ and thus, since $E=Z+|V_2|$, we conclude  
\[ E = 2\cdot Z . \]
In particular, this implies that $E$ is even and 
\[ H(Z) = H(E) . \]
Hence it is enough to determine the entropy of $E$. 
We claim that 
\[\mathbb{P}(E=2k) = \binom{n}{2k}\frac{1}{2^{n-1}}, \text{ for } k\le n/2.\]
This is clearly true for $k=0$, so assume that $k>0$ is such that $k\le n/2$. Now notice that, if we fix an orientation of the edges of $C_n$ for which $E=2k$, then between any two vertices with zero in-degree there exists a vertex whose in-degree equals two and, conversely, between any two vertices with in-degree equal to two there exists a vertex whose in-degree equals zero. 
This implies that, if we fix $2k$ vertices $\{v_1,...,v_{2k}\}$, whose in-degree is even, 
and we fix the in-degree of vertex $v_1$, say $\deg_{G}^{-}(v_1)=0$,  then the in-degrees of all other vertices are determined (that is, $v_2\in V_2, v_3\in V_0, \ldots, v_{2k}\in V_2$ and all other vertices have in-degree one). Since there are two ways to choose the in-degree of $v_1$, the claim follows. 
Summarising, we have shown that $E$ has the same distribution as a $\Bin(n,e)$ random variable and so their entropies are equal. 
\end{proof}

Using Lemma~\ref{coin_lemma}, we obtain a lower bound on the entropy of $X_{C_n}$ which is expressed in terms of the entropy of a binomial random variable. \\

\begin{lemma}\label{cycle_entropy}
Let $C_n$ be a cycle of $n\ge 3$ vertices. Then 
\[ H(X_{C_n}) \ge H(\Bin(n-1,1/2))  -1 . \]
\end{lemma}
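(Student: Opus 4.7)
The plan is to combine Lemma~\ref{coin_lemma} with a coupling argument. By Lemma~\ref{coin_lemma}, $H(X_{C_n}) = H(\Bin(n,e))$, so it suffices to establish
\[ H(\Bin(n,e)) \ge H(\Bin(n-1,1/2)) - 1. \]
To compare these two distributions I would couple them as partial sums of i.i.d. Bernoulli$(1/2)$ random variables $\xi_1,\dots,\xi_n$: set $B_{n-1}=\xi_1+\cdots+\xi_{n-1}$ and $B_n = B_{n-1}+\xi_n$, so that $B_{n-1}\sim\Bin(n-1,1/2)$, $B_n\sim\Bin(n,1/2)$, and $(B_n\mid B_n\text{ even})\sim\Bin(n,e)$.

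Two observations drive the argument. First, the event $E=\{B_n\text{ is even}\}$ is equivalent to $\xi_n \equiv B_{n-1}\pmod 2$; since $\xi_n$ is independent of $B_{n-1}$ and is Bernoulli$(1/2)$, this event has conditional probability $1/2$ given any value of $B_{n-1}$. Consequently $E$ is independent of $B_{n-1}$, and the conditional distribution of $B_{n-1}$ given $E$ coincides with its unconditional $\Bin(n-1,1/2)$ distribution. Second, given $B_{n-1}$ and the event $E$, the value of $\xi_n$ is forced (namely $\xi_n = B_{n-1}\bmod 2$), so $B_n$ is a deterministic function of $B_{n-1}$ on $E$; conversely, given that $B_n=2j$, the variable $B_{n-1}$ can only take the two values $2j$ or $2j-1$.

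With these in hand I would apply the chain rule of entropy to the joint law of $(B_{n-1},B_n)$ conditional on $E$, in two different orders:
\[
H(B_{n-1},B_n\mid E) = H(B_n\mid E) + H(B_{n-1}\mid B_n,E) = H(B_{n-1}\mid E) + H(B_n\mid B_{n-1},E).
\]
The first observation gives $H(B_{n-1}\mid E) = H(\Bin(n-1,1/2))$; the deterministic part of the second observation gives $H(B_n\mid B_{n-1},E)=0$; and the at-most-two-values part of the second observation gives $H(B_{n-1}\mid B_n,E)\le\log 2 = 1$. Rearranging yields
\[
H(\Bin(n,e)) = H(B_n\mid E) = H(\Bin(n-1,1/2)) - H(B_{n-1}\mid B_n,E) \ge H(\Bin(n-1,1/2)) - 1,
\]
which, combined with Lemma~\ref{coin_lemma}, is the desired inequality.

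I do not expect a serious obstacle here: the coupling together with the fact that the parity of $B_n$ is independent of $B_{n-1}$ makes both sides of the chain rule easy to evaluate. The only subtle point is noticing that the parity-conditioning event is genuinely independent of $B_{n-1}$ (so that no bias is introduced when passing to the conditional distribution), and that the ``inverse'' map from $B_n$ to $B_{n-1}$ on the even fiber has at most $2$ preimages, which is precisely what yields the additive loss of $1$ bit.
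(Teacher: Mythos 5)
Your proof is correct and follows essentially the same route as the paper: both arguments realize $\Bin(n,e)$ as the image of $\Bin(n-1,1/2)$ under the ``round up to even'' map and then apply the chain rule in two orders, using that the forward map is deterministic and the inverse has at most two preimages to lose only one bit. The only difference is cosmetic: the paper verifies the distributional identity $\Bin(n,e)\sim X+\delta_X$ directly via Pascal's rule, whereas you obtain it from the coupling and the independence of the parity event from $B_{n-1}$.
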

\begin{proof}
Lemma~\ref{coin_lemma} implies that it is enough to show 
\[ H(\Bin(n,e)) \ge H(\Bin(n-1,1/2))-1 .\]
We first show that an outcome of $\Bin(n,e)$ can be obtained as follows: First draw from a $\Bin(n-1,1/2)$ random variable. If the outcome is even, then add $0$ the outcome. If the outcome is odd, then add $1$. 
Formally, let $X\sim\Bin(n-1,1/2)$ and define the random variable $\delta_X$ by
\[
\delta_{X} =
\begin{cases}
0, &\text{ if } \; X \; \text{ is even} \\
1, &\text{ if } \; X \; \text{ is odd}.
\end{cases}
\]
We claim that $\Bin(n,e)\sim X + \delta_X$. To prove the claim, first notice that $X + \delta_X$ is always even and fix an even integer $k$ from $\{0,1,\ldots,n\}$. 
Then, using the convention $\binom{a}{b}=0$, whenever $b<0$ or $a<b$, and the relation $\binom{n}{k} =\binom{n-1}{k} + \binom{n-1}{k-1}$, we may write 
\begin{eqnarray*} 
\mathbb{P}(X + \delta_X =k) &=& \mathbb{P}(\Bin(n-1,1/2)=k) +  \mathbb{P}(\Bin(n-1,1/2)=k-1) \\
&=& \binom{n}{k} \frac{1}{2^{n-1}}\\
&=& \mathbb{P}(\Bin(n,e)=k),
\end{eqnarray*}
and the claim follows. Hence it is enough to show 
\[ H(X+ \delta_X) \ge H(\Bin(n-1,1/2))  -1. \]
Now, using the chain rule for entropy, we have 
\[ H(X+\delta_X , X) = H(X)+ H(X+\delta_X|X) = H(X+\delta_X) + H(X|X+\delta_X) \] 
and therefore  
\begin{eqnarray*} 
H(X+\delta_X)  &=& H(X)+ H(X+\delta_X|X) - H(X|X+\delta_X) \\
&=& H(\Bin(n-1,1/2)) + H(X+\delta_X|X) - H(X|X+\delta_X),
\end{eqnarray*}
which in turn implies that it is enough to show 
\[ H(X+\delta_X|X) - H(X|X+\delta_X) \ge -1 . \]
Since $X$ determines $X+\delta_X$, we have $H(X+\delta_X|X)=0$ and we are left with showing 
\[ H(X|X+\delta_X) \le 1 . \]
To this end, write  
\[
H(X|X+\delta_X) = \sum_{k \text{ even}}H(X | X+\delta_X = k) \cdot \mathbb{P}(X+\delta_X = k).
\]
Now, conditional on the event $\{X+\delta_X = k\}$, it follows $X\in \{k-1,k\}$ and therefore  
\[ H(X | X+\delta_X = k) \le \log 2 = 1 . \]
This yields   
\[ H(X|X+\delta_X) \le \sum_{k \text{ even}}  \mathbb{P}(X+\delta_X = k) =1, \]
as desired. 
\end{proof}

The proof of Theorem~\ref{fix_r} is almost complete. 

\begin{proof}[Proof of Threorem~\ref{fix_r}]
It is known (see \cite[p. 4]{Adel_et_al})  that 
\[
H(\Bin(n-1,1/2)) \ge \frac{1}{2}\log(n-1)  + \frac{1}{2}\log(\pi e) -\frac{1}{2}.
\]
Hence, using the inequality $\log(n-1)\ge \log(n) - \frac{1}{(n-1)\ln(2)}$, we conclude
\[  H(\Bin(n-1,1/2))\ge \frac{1}{2}\log(n) + \frac{1}{2}\log(\pi e) -\frac{1}{2}-\frac{1}{2\ln(2)(n-1)} \]
and Lemma~\ref{cycle_entropy} yields 
\[H(X_{C_n}) \ge   \frac{1}{2}\log(n) + \frac{1}{2}\log(\pi e) -\frac{3}{2}-\frac{1}{2\ln(2)(n-1)}.    \] 
The result follows. 
\end{proof}

\section{A conjecture}\label{sec:4}

In this section we define a hypergraph from the class $\mathcal{D}_{n,n,r}$  which, we believe, maximizes the entropy of the number of vertices of non-zero in-degree, after a random orientation on its edges.   
 
Fix a positive integer $r\ge 3$. A circular $r$-uniform hypergraph on $n>r$ vertices is defined as follows: 
Begin with a cycle $C_n$ on $n$ vertices, identified with $\mathbb{Z}_n$.
A proper subset $I\subset \mathbb{Z}_n$ is a \emph{path} in $C_n$ if it induces a connected sub-graph
of the graph $C_n$. The \emph{size} of a path is the number of vertices in the corresponding 
induced  sub-graph.  
We call a hypergraph \emph{circular} if (up to isomorphism) its set of
vertices is equal to $\mathbb{Z}_n$ and its edges are paths of $C_n$ of size $r$. 
We denote by $\Cy_{n,r}$ the circular hypergraph on $n$ vertices whose edge set consists of all paths of size $r$.  Circular hypergraphs have attracted some attention due to the fact that they 
share similar properties with certain classes 
of matrices (see \cite{Quilliot, Tucker}).  Recall that $X_{\Cy_{n,r}}$ denotes the number of vertices 
with non-zero in-degree, after a random orientation on the edges of $\Cy_{n,r}$. \\

We conjecture that circular hypergraphs are such that 
\[ H(X_{\Cy_{n,r}}) \ge H(X_{\Hy}), \text{ for all } \Hy \in \mathcal{D}_{n,n,r} . \]
Moreover, we believe that the following holds true. \\

\begin{conjecture}\label{circular_conjecture}
Fix a positive integer $r\ge 3$. Then for all $n>r$, it holds 
\[ H(X_{\Cy_{n,r}}) \ge \frac{1}{2}\log\left(n/r\right) - O(1). \]
\end{conjecture}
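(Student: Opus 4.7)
The plan is to follow the template of Theorem~\ref{fix_r}, reducing the lower bound on $H(X_{\Cy_{n,r}})$ to an entropy estimate for a discrete random variable that concentrates on a window of size $\Theta(\sqrt{n})$. Exactly as in the proof of Lemma~\ref{coin_lemma}, one has $H(X_{\Cy_{n,r}}) = H(Z)$, where $Z := n - X_{\Cy_{n,r}}$ counts the vertices of zero in-degree. Encoding the random orientation of $\Cy_{n,r}$ by independent uniform random variables $\eta_1, \ldots, \eta_n \in \{0, 1, \ldots, r-1\}$, so that the edge $e_i = \{i, i+1, \ldots, i+r-1\}$ is oriented towards the vertex $i + \eta_i$ (modulo $n$), vertex $v_j$ is isolated precisely when $\eta_{j-k} \neq k$ for every $k \in \{0, \ldots, r-1\}$. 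Writing $\mathbf{J}_j$ for the indicator of this event, one obtains $Z = \sum_{j \in \mathbb{Z}_n} \mathbf{J}_j$, and the key structural observation is that $(\mathbf{J}_j)_{j \in \mathbb{Z}_n}$ is a stationary $r$-dependent sequence: each $\mathbf{J}_j$ is a function of the length-$r$ window $(\eta_{j-r+1}, \ldots, \eta_j)$, so two indicators whose windows are disjoint are independent.

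With this structure in hand, my plan is to invoke a local central limit theorem for sums of $m$-dependent stationary lattice random variables (a version due to Heinrich is convenient) to conclude
\[ \max_k \mathbb{P}[Z = k] \leq \frac{C_r}{\sqrt{n}}, \]
for some constant $C_r > 0$ depending only on $r$. Combined with the min-entropy lower bound $H(Z) \ge -\log \max_k \mathbb{P}[Z = k]$, this immediately yields $H(Z) \ge \tfrac{1}{2}\log n - \log C_r$, from which the conjectured estimate
\[ H(X_{\Cy_{n,r}}) = H(Z) \ge \tfrac{1}{2}\log(n/r) + \tfrac{1}{2}\log r - \log C_r = \tfrac{1}{2}\log(n/r) - O(1) \]
follows.

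The main technical hurdle is to verify the non-degeneracy hypotheses of the local CLT: one must check that the asymptotic variance
\[ \sigma_r^2 := \text{Var}(\mathbf{J}_0) + 2\sum_{d=1}^{r-1} \text{Cov}(\mathbf{J}_0, \mathbf{J}_d) \]
is strictly positive, and that the distribution of $Z$ is not supported on a proper sub-lattice of $\mathbb{Z}$. Aperiodicity can be established by exhibiting two realizations of $(\eta_1, \ldots, \eta_n)$ whose corresponding values of $Z$ differ by exactly one; positivity of $\sigma_r^2$ should follow from a direct computation of $\text{Cov}(\mathbf{J}_0, \mathbf{J}_d)$ using the product structure of the events defining $\mathbf{J}_j$, the non-triviality being reflected in the observation that knowing the orientations of $r$ consecutive edges is far from determining $\mathbf{J}_0$. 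An alternative and perhaps more elementary route would be to partition $\{1, \ldots, n\}$ into $\lfloor n/r \rfloor$ disjoint windows of length $r$ and extract the subsum $Y = \sum_{t=1}^{\lfloor n/r \rfloor} \mathbf{J}_{tr}$, whose terms are i.i.d.\ $\text{Bernoulli}((1-1/r)^r)$ and whose entropy is $\tfrac{1}{2}\log(n/r) + O(1)$ by the binomial bound used in Theorem~\ref{fix_r}; however, controlling the correlation between $Y$ and $Z - Y$, whose defining windows inevitably overlap those of $Y$, appears to be the bottleneck of this approach, which is why I expect the local CLT route to be cleaner.
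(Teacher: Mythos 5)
First, a point of reference: the statement you are proving is Conjecture~\ref{circular_conjecture} --- the paper offers no proof of it, so there is nothing of the author's to compare your argument against, and what follows is an assessment of your plan on its own terms. Your structural reductions are correct: $H(X_{\Cy_{n,r}})=H(Z)$ since $X_{\Cy_{n,r}}=n-Z$; the encoding of the orientation by i.i.d.\ uniform $\eta_i$ and the identity $\mathbf{J}_j=\prod_{k=0}^{r-1}\mathbf{1}[\eta_{j-k}\neq k]$ are right; the family $(\mathbf{J}_j)$ is indeed stationary and finitely dependent with $\mathbb{P}[\mathbf{J}_j=1]=(1-1/r)^r$; and the chain ``local CLT $\Rightarrow\max_k\mathbb{P}[Z=k]\le C_r/\sqrt n\Rightarrow H(Z)\ge\frac12\log n-\log C_r$'' is valid, because $H(Z)\ge-\log\max_k\mathbb{P}[Z=k]$.

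The gap is that every step carrying actual mathematical weight is deferred. (i) The family $(\mathbf{J}_j)_{j\in\mathbb{Z}_n}$ is cyclically indexed, so it is not literally a stationary $m$-dependent sequence on $\mathbb{N}$ and Heinrich-type local limit theorems do not apply verbatim; you must break the cycle, e.g.\ by conditioning on the $r-1$ variables $\eta_{n-r+2},\dots,\eta_n$, which costs at most $(r-1)\log r=O_r(1)$ in entropy and leaves a genuine (though no longer stationary near the boundary) $m$-dependent sum. (ii) Positivity of $\sigma_r^2$ is not a formality here, because all the covariances are strictly negative: for $1\le d\le r-1$ one computes $\mathbb{E}[\mathbf{J}_0\mathbf{J}_d]=\left(\frac{r-2}{r}\right)^{r-d}\left(\frac{r-1}{r}\right)^{2d}<\left(\frac{r-1}{r}\right)^{2r}=\mathbb{E}[\mathbf{J}_0]\mathbb{E}[\mathbf{J}_d]$, so $\sigma_r^2$ is a difference of comparable quantities. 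It is positive for small $r$ (e.g.\ $\sigma_3^2=64/729$), and the geometric form of the covariances should make a general proof routine, but it has to be written down; ``should follow from a direct computation'' is exactly the point at which a degenerate-variance counterexample would hide if one existed. (iii) The lattice condition likewise needs an explicit witness. Until (i)--(iii) are carried out this is a programme, not a proof. One suggestion: since you only need an upper bound on $\max_k\mathbb{P}[Z=k]$ rather than a full local limit theorem, a Kolmogorov--Rogozin concentration inequality applied after conditioning on alternate length-$r$ blocks of the $\eta_i$ (so that $Z$ becomes, conditionally, a sum of $\Theta(n/r)$ independent, bounded, non-degenerate integer variables) would bypass both the local CLT and the lattice issue, and is probably the cleanest way to close the argument.
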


\end{document}